\newtheorem{theorem}{Theorem}
\newtheorem{corollary}{Corollary}
\newtheorem{lemma}{Lemma}
\newenvironment{remark}
{\smallskip\noindent{\bf Remark\/}.}{\smallskip\par}
\newenvironment{proof}
{\noindent{\bf Proof\/}.}{{ $\Box$}\smallskip\par}
\title{On the $S_n$-equivariant Euler characteristic of $M_{2,n}$.}
\author{E. Gorsky\footnote{Supported by the grant RFBR-007-00593.} }
\date{}
\begin{document}

\maketitle

\begin{abstract}
The Getzler's formula relates the $S_n$-equivariant Hodge-Deligne polynomial of the space of ordered tuples of distinct points on a given variety $X$ with the Hodge-Deligne polynomial of $X$. We obtain the analogue of this formula for the case when $X$ has a nontrivial automorphism group. Collecting together all strata of
$\mathcal{M}_2$ with different automorphism groups, we derive a formula for
the $S_n$-equivariant Euler characteristic of $\mathcal{M}_{2,n}$.
\end{abstract}

\section{Introduction}

For a quasiprojective algebraic variety we will denote by $B(X,n)$ the configuration space of unordered $n$-tuples of distinct points on $X$, by $S^{n}X$ the $n$-th symmetric power of $X$, by $F(X,n)$
the configuration space of ordered $n$-tuples of distinct points on $X$. It is clear that $B(X,n)$ is the quotient of $F(X,n)$ by the action of the symmetric group $S_n$.

The problem of calculation of the homology of $B(X,n)$ and $F(X,n)$ for a given $X$ is well known. For example, for $X=\mathbb{C}$ it was solved by Arnold (\cite{arnold}). Since one has an action of the symmetric group $S_n$ on $F(X,n)$, all (co)homology groups of $F(X,n)$ are representations of $S_n$. Therefore we can consider the $S_n$-equivariant Euler characteristic $\chi^{S_n}_{F(X,n)}$ (or $S_n$-equivariant Hodge-Deligne polynomial $e^{S_n}_{F(X,n)}$) of $F(X,n)$ --  an alternating sum (resp. generating function) of characters of these representations, belonging to the ring $\Lambda$ of symmetric polynomials of infinite number of variables.

E. Getzler (\cite{getzler}) proved, that
$$1+\sum_{n=1}^{\infty}t^{n}e^{S_n}_{F(X,n)}(u,v)=\prod_{n=1}^{\infty}(1+p_nt^{n})^{{1\over n}\sum_{m|n}
\mu(n/m)\Psi_m(e_X(u,v))},$$
where $\mu(k)$ is the Moebius function, $p_n$ is the $n$-th Newton polynomial of infinite number of variables, and $\Psi_m$ are Adams operations (see below).

For example, Getzler's formula implies that
$$1+\sum_{n=1}^{\infty}t^{n}\chi(B(X,n))=(1+t)^{\chi(X)}.$$

O. Tommasi has proved (\cite{tommasi}), that rational homologies of the moduli space of hyperelliptic curves of given arbitrary genus $g$ are trivial. Since every smooth genus 2 curve is hyperelliptic, homologies of the moduli space $\mathcal{M}_2$ of genus 2 curves are trivial too.
Using this fact and the Getzler's formula, we will derive  in Theorem 2 the $S_n$-equivariant Euler characteristic of the moduli space $\mathcal{M}_{2,n}$ of curves of genus 2 with $n$ marked points for all $n$.

The author is grateful to S. Gusein-Zade, M. Kazaryan and S. Lando
for the useful discussions and remarks.

\section{Adams operations}

Some rings, instead of the addition and the multiplication, carry an additional structure, namely a family of ring automorphisms
$\Psi_k$, labeled by natural numbers, such that
$$\Psi_k(x+y)=\Psi_k(x)+\Psi_k(y),\,\,\, \Psi_k(xy)=\Psi_k(x)\cdot\Psi_k(y),$$
$$\Psi_1(x)=x,\,\,\,\mbox{\rm and}\,\, \Psi_k(\Psi_m(x))=\Psi_{km}(x).$$
Such automorphisms are called the Adams operations.

One of the most natural examples of such operations can be constructed in the Grothendieck ring of representations of a finite group $G$. If $E$ is a (virtual) representation, one can consider its symmetric powers $S^{k}E$ and the series
$${d\over dt}\ln (1+S^1E\cdot t+S^2E\cdot t^2+S^3E\cdot t^3\ldots)=\Psi_1(E)+\Psi_2(E)\cdot t+\Psi_3(E)\cdot t^2+\ldots.$$
This equation is a definition of Adams operations.

If $\zeta_{E}(g)$ is the character of a representation $E$, then it is easy to see, that the character of a representation $\Psi_k(E)$ evaluated at an element $g$ is equal to
$$\zeta_{\Psi_k(E)}(g)=\zeta_{E}(g^k).$$
The above equations for Adams operations are transparent from this formula.

We have also used in the Getzler's formula Adams operations on the ring of polynomials of two variables $u$ and $v$. In fact, in the ring of polynomials of any number of variables one can consider
$$\Psi_k(P(x_1,\ldots, x_n))=P(x_1^k, x_2^k,\ldots, x_n^k).$$
Again all above equations are clear.

\section{Calculations}

Consider the forgetful map $\mathcal{M}_{2,n}\rightarrow \mathcal{M}_2$, sending a curve with marked points to the same curve without marks. Naively, for a curve $C$ the fiber of this map over
the point in $\mathcal{M}_2$, corresponding to a point $C$, is just
$F(C, n)$. Using the Getzler's formula, one could easily calculate the Euler characteristic of a fiber (it is the same for all $C$), and
therefore obtain an answer for $\mathcal{M}_{2,n}$. This naive approach yields wrong answer $(1+p_1t)^{-2}$, which does not coincide with known answers even for $\mathcal{M}_{2,1}$.

In fact, the situation is more complicated. The fiber of the forgetful map over a point $C$ is $F(C, n)/Aut(C)$, where $Aut(C)$
is a group of automorphisms of a curve $C$. For example, for $\mathcal{M}_{2,1}$ the fiber of the forgetful map is not $C$ with the Euler characteristic -2, but the quotient $C/Aut(C)$.
For a generic  curve the automorphism group contains only the hyperelliptic involution, and $C/Aut(C)=\mathbb{CP}^1$. For other
curves $C/Aut(C)$ is a quotient of the projective line by the finite group action, and is isomorphic to $\mathbb{CP}^1$ as well.
Therefore $\chi(\mathcal{M}_{2,1})=2.$

Suppose that a finite group $G$ acts on a variety $X$, and each  nonidentical element of $G$ has a finite number of the fixed points. Let $v(g)$ be an order of an element $g$ and $O_k(g)$ be a number of orbits of length $k$ of the action of $g$ on $X$ (of course, $1\le k\le v(g)-1$ and $k|v(g)$).

\begin{theorem}
$$\sum_{n=0}^{\infty}t^{n}\chi^{S_n}(F(X,n)/G)={1\over |G|}\sum_{g\in G}{\prod_{k|v(g)}(1+p_kt^{k})^{O_k(g)}\cdot (1+p_{v(g)}t^{v(g)})^{\chi(X)\over v(g)}\over (1+p_{v(g)}t^{v(g)})^{{1\over v(g)}\sum_{l|v(g)}l\cdot O_l(g)}}.$$
\end{theorem}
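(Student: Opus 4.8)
The plan is to reduce the equivariant Euler characteristic of the quotient to a sum of ordinary Euler characteristics of fixed loci on $F(X,n)$ itself, and then to evaluate those by a configuration-space count. Writing $\sigma_\lambda$ for a permutation of cycle type $\lambda=(1^{m_1}2^{m_2}\cdots)$ and $z_\lambda=\prod_k k^{m_k}m_k!$, the Frobenius-characteristic description of the $S_n$-equivariant Euler characteristic reads $\chi^{S_n}(F(X,n)/G)=\sum_{\lambda\vdash n}z_\lambda^{-1}p_\lambda\,\chi\big((F(X,n)/G)^{\sigma_\lambda}\big)$, where each fixed-point Euler characteristic is the alternating trace of $\sigma_\lambda$ on $H^{*}_{c}(F(X,n)/G)$ by the Lefschetz fixed point theorem. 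Since $G$ is finite and we use rational coefficients, $H^{*}_{c}(F(X,n)/G)=H^{*}_{c}(F(X,n))^{G}$, so inserting the averaging projector $|G|^{-1}\sum_{g}g$ and applying Lefschetz once more turns this into $|G|^{-1}\sum_{g\in G}\chi\big(F(X,n)^{\sigma_\lambda g}\big)$, where $\sigma_\lambda$ permutes coordinates and $g$ acts diagonally. After exchanging the summations it suffices to prove, for each fixed $g$ of order $v=v(g)$, that $A_g:=\sum_n t^n\sum_{\lambda\vdash n}z_\lambda^{-1}p_\lambda\,\chi(F(X,n)^{\sigma_\lambda g})$ equals the corresponding summand of the claimed formula.

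First I would describe $F(X,n)^{\sigma_\lambda g}$ explicitly. A tuple $(x_1,\dots,x_n)$ is fixed by $\sigma_\lambda g$ iff, along each cycle $(i_1\cdots i_k)$ of $\sigma_\lambda$, the coordinates are forced to be $x_{i_1}, gx_{i_1},\dots, g^{k-1}x_{i_1}$ with $g^{k}x_{i_1}=x_{i_1}$; distinctness of the points then requires the $\langle g\rangle$-orbit of $x_{i_1}$ to have length exactly $k$, whence $k\mid v$, and requires the orbits attached to distinct cycles to be disjoint. Thus the fixed locus factors over cycle lengths, and for the $m_k$ cycles of a fixed length $k$ it is the space $Z_{k,m_k}$ of $m_k$ ordered points of $X$ whose $g$-orbits have length $k$ and are pairwise disjoint. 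Letting $W_k\subset X$ be the locally closed set of points with orbit length exactly $k$ and $B_k=W_k/\langle g\rangle$, the map $Z_{k,m_k}\to F(B_k,m_k)$ is a $k^{m_k}$-sheeted covering, so additivity and multiplicativity of the compactly supported Euler characteristic give $\chi(Z_{k,m_k})=k^{m_k}\chi(F(B_k,m_k))=k^{m_k}(O_k)_{m_k}$, where $(O_k)_{m_k}=O_k(O_k-1)\cdots(O_k-m_k+1)$, using the standard identity $\chi(F(Y,m))=\chi(Y)(\chi(Y)-1)\cdots(\chi(Y)-m+1)$ together with $\chi(B_k)=O_k$.

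Assembling these contributions, $\chi(F(X,n)^{\sigma_\lambda g})=\prod_{k\mid v}k^{m_k}(O_k)_{m_k}$, and the factor $k^{m_k}$ cancels precisely the $k^{m_k}$ in $z_\lambda$. Summing the resulting product over all $(m_k)_{k\mid v}$ factorizes, and each factor is a binomial series $\sum_{m}\binom{O_k}{m}(p_kt^k)^m=(1+p_kt^k)^{O_k}$, so $A_g=\prod_{k\mid v}(1+p_kt^k)^{O_k(g)}$. To match the stated form I would isolate the generic stratum $k=v$: the decomposition $X=\bigsqcup_{k\mid v}W_k$ yields $\chi(X)=\sum_{k\mid v}k\,O_k(g)$, so the generic "orbit number" $O_v(g)$ — genuinely an Euler characteristic rather than an integer — equals $v^{-1}\big(\chi(X)-\sum_{l\mid v,\,l<v} l\,O_l(g)\big)$. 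Substituting this into the $k=v$ factor converts $\prod_{k\mid v}(1+p_kt^k)^{O_k}$ into exactly the quotient $\prod_{k\mid v,\,k<v}(1+p_kt^k)^{O_k}\cdot(1+p_vt^v)^{\chi(X)/v}\big/(1+p_vt^v)^{v^{-1}\sum_{l\mid v}lO_l}$ appearing in the theorem, whose numerator product runs over the finite orbit lengths $1\le k\le v-1$ for which $O_k(g)$ is defined.

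The main obstacle I anticipate is the second step, the passage from the quotient to the $G$-average of twisted fixed loci: one must justify $H^{*}_{c}(F(X,n)/G)=H^{*}_{c}(F(X,n))^{G}$ and the two applications of the Lefschetz fixed point theorem for the commuting finite-order transformations $\sigma_\lambda$ and $g$, bearing in mind that $(F(X,n)/G)^{\sigma_\lambda}$ is generally not the quotient of $F(X,n)^{\sigma_\lambda}$. The only other delicate point is the non-integral, motivic orbit number $O_v(g)$ in the generic stratum, where the binomial-series manipulation must be read as a formal identity in the completed ring of symmetric functions; the hypothesis that each nontrivial $g$ has finitely many fixed points guarantees that the non-generic sets $W_k$ ($k<v$) are finite, so the $O_k(g)$ with $k<v$ are honest integers and only the $k=v$ exponent carries the full Euler characteristic of $X$.
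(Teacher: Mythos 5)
Your proof is correct, but it takes a genuinely different route from the paper's. The paper's proof runs entirely through Getzler's formula: it applies it with coefficients in $K_0(Rep(G))$, taking $R$ to be the alternating sum of the cohomology of $X$ as a virtual $G$-representation, evaluates characters at each $g\in G$, uses the Lefschetz fixed point theorem on $X$ to identify $\chi_R(g^d)$ with $\chi(X)$ when $v(g)\mid d$ and with the fixed-point count $\sum_{l\mid d}l\,O_l(g)$ otherwise, and then collapses the exponents ${1\over k}\sum_{d\mid k}\mu(k/d)\chi_R(g^d)$ of the infinite product by M\"obius-inversion identities before averaging over $G$. You never invoke Getzler's formula: you expand the Frobenius characteristic over cycle types, use the transfer isomorphism $H^{*}_{c}(F(X,n)/G)\cong H^{*}_{c}(F(X,n))^{G}$, apply Lefschetz to the composite automorphisms $\sigma_\lambda g$ of the smooth variety $F(X,n)$, and evaluate the twisted fixed loci geometrically via the $k^{m_k}$-sheeted coverings $Z_{k,m_k}\to F(B_k,m_k)$ and the falling-factorial formula for $\chi(F(Y,m))$, resumming binomial series at the end. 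Your route buys self-containedness: in effect you reprove the case of Getzler's formula that is needed, rather than assuming (as the paper does without comment) that the formula remains valid with values in the $\lambda$-ring of virtual $G$-representations; moreover it directly produces the stratified form ${1\over |G|}\sum_{g}\prod_{k\mid v(g)}(1+p_kt^{k})^{\chi(X_k(g)/\langle g\rangle)}$ of the paper's Remark after Corollary 1, valid without any finiteness hypothesis on fixed points --- finiteness is used only to identify $\chi(X_k(g)/\langle g\rangle)$ with the integer $O_k(g)$ for $k<v(g)$ and to rewrite the generic exponent as ${1\over v}(\chi(X)-\sum_{l} l\,O_l(g))$. The paper's route buys brevity: granting Getzler's formula, the whole proof is a character computation with no geometry beyond Lefschetz on $X$ itself. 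The technical points you flag as the main obstacle (transfer with rational coefficients for finite quotients, and the Lefschetz fixed point theorem for finite-order automorphisms of quasiprojective varieties in compactly supported cohomology) are standard and are exactly the inputs the paper also uses implicitly, so they do not constitute a gap.
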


\begin{proof}
Let $R\in K_0(Rep(G))$ be the alternate sum of cohomologies of $X$ as representations of the group $G$ (belonging to the Grothendieck ring of representations of $G$). By the Getzler's formula the analogous alternate sum of $G$-representations for $S_n$-equivariant cohomologies of $F(X,n)$ equals to
$$\prod_{k=1}^{\infty}(1+p_kt^{k})^{{1\over k}\sum_{d|k}\mu(k/d)\Psi_d(R)},$$
and its character at the element $g$ equals to
$$\xi(g)=\prod_{k=1}^{\infty}(1+p_kt^{k})^{{1\over k}\sum_{d|k}\mu(k/d)\chi_R(g^d)},$$
if $\chi_R$ is a character of the representation $R$.
Note that the Lefshetz fixed points theorem states that
$$\chi_R(g)=\sum_{i=0}^{\infty}Tr(g|_{H^{i}(X)})=L(g),$$
where $L(g)$ is the Lefshetz number of the map $g$, that is, in our case $\chi_R(e)=\chi(X)$, and for other $g$ $\chi_R(g)$ equals to the number of fixed points of the map  $g$.

Moreover, $\chi^{S_n}(F(X,n)/G)$ is equal to the dimension
of the $G$-invariant part in the cohomologies of $F(x,n)$, that is
$${1\over |G|}\sum_{g\in G}\xi(g)$$.

Let $v$ be an order of $g$. If $d$ is divisible by $v$, then  $\chi_R(g^d)=\chi(X),$ otherwise the number of fixed points of  $g^d$ is equal to the number of points in orbits, whose length
is a divisor of $d$, that is $\sum_{l|d}l\cdot O_l(g).$
Therefore, $${1\over k}\sum_{d|k}\mu(k/d)\chi(g^d)={1\over k}\sum_{d|k,d\vdots v}\mu(k/d)\chi(X)+
{1\over k}\sum_{d|k,\not v|d}\mu(k/d)\sum_{l|d}l\cdot O_l(g).$$
Note that $${1\over k}\sum_{d|k}\mu(k/d)\sum_{l|d}l\cdot O_l(g)={1\over k}\sum_{l|k}l\cdot O_l(g)\sum_{d|k,d\vdots l}\mu(d)=
{1\over k}k\cdot O_k(g)=O_k(g),$$
$${1\over k}\sum_{d|k,v|d}\mu(k/d)\sum_{l|d}l\cdot O_l(g)={1\over k}\sum_{d|k,v|d}\mu(k/d)\sum_{l|v}l\cdot O_l(g)=\delta_{v,k}{1\over k}\sum_{l|v}l\cdot O_l(g),$$
$${1\over k}\sum_{d|k,d\vdots v}\mu(k/d)\chi(X)=\delta_{k,v}{1\over k}\chi(X).$$
Combining these answers, we obtain
$$\xi(g)={\prod_{k|v}(1+p_kt^{k})^{O_k(g)}\cdot (1+p_vt^v)^{{1\over v}\chi(X)}\over (1+p_vt^v)^{{1\over v}\sum_{l|v}l\cdot O_l(g)}},$$
completing the proof of the theorem.
\end{proof}

\begin{corollary}
$$\sum_{n=0}^{\infty}{t^{n}\over n!}\chi(F(X,n)/G)={1\over |G|}((1+t)^{\chi(X)}+\sum_{g\neq e}(1+t)^{L(g)}),$$
where $L(g)=O_1(g)$ is a number of fixed points of the map $g$.
\end{corollary}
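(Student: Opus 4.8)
The plan is to deduce the Corollary from Theorem~1 by a single substitution in the symmetric-function variables which collapses the $S_n$-equivariant Euler characteristic to the ordinary one. Recall that $\chi^{S_n}(F(X,n)/G)$ is the Frobenius characteristic of the virtual $S_n$-representation $V_n=\sum_i(-1)^iH^i(F(X,n)/G)$, that is $\frac{1}{n!}\sum_{\sigma\in S_n}\chi_{V_n}(\sigma)\,p_{\lambda(\sigma)}$, where $\lambda(\sigma)$ is the cycle type of $\sigma$. The ordinary Euler characteristic $\chi(F(X,n)/G)$ is simply the dimension $\chi_{V_n}(e)$ of this virtual representation, i.e. the value of its character at the identity.

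The key observation is that the substitution $p_1\mapsto 1$, $p_k\mapsto 0$ for $k\ge 2$ extracts exactly this dimension, up to the factor $n!$. Indeed, in $\frac{1}{n!}\sum_\sigma\chi_{V_n}(\sigma)\,p_{\lambda(\sigma)}$ every permutation with a cycle of length $\ge 2$ contributes a monomial containing some $p_k$ with $k\ge2$ and hence vanishes, while the identity, of cycle type $(1^n)$, contributes $\frac{1}{n!}\chi_{V_n}(e)\,p_1^{\,n}\mapsto\frac{1}{n!}\chi(F(X,n)/G)$. Applying this substitution term by term to $\sum_n t^n\chi^{S_n}(F(X,n)/G)$ therefore produces precisely the left-hand side $\sum_n\frac{t^n}{n!}\chi(F(X,n)/G)$ of the Corollary. (By contrast, the substitution $p_k\mapsto1$ for all $k$ computes the dimension of the $S_n$-invariants, and so would recover $\chi(B(X,n))$ rather than $\chi(F(X,n)/G)$; keeping the distinction straight is what fixes the $t^n/n!$ normalization.)

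It then remains to carry out the same substitution on the right-hand side of Theorem~1 and verify that it yields $\frac{1}{|G|}\bigl((1+t)^{\chi(X)}+\sum_{g\neq e}(1+t)^{L(g)}\bigr)$, which I would do one summand at a time. For $g\neq e$ one has $v(g)\ge 2$, so every factor $(1+p_{v(g)}t^{v(g)})$ in both numerator and denominator becomes $1$, and in the product $\prod_{k\mid v(g)}(1+p_kt^k)^{O_k(g)}$ only the term $k=1$ survives, giving $(1+t)^{O_1(g)}=(1+t)^{L(g)}$ since $L(g)=O_1(g)$ is the number of fixed points of $g$.

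The one point deserving care is the identity element $g=e$, where $O_1(e)$, the number of one-point orbits, is the (infinite) number of points of $X$ and is not a finite invariant. Here the merit of the formula in Theorem~1 is that for $v(e)=1$ the numerator factor $(1+p_1t)^{O_1(e)}$ and the denominator factor $(1+p_1t)^{O_1(e)}$ cancel identically, leaving $(1+p_1t)^{\chi(X)}$, which specializes to $(1+t)^{\chi(X)}$. Summing the $g=e$ and $g\neq e$ contributions and dividing by $|G|$ gives the asserted identity. I expect this cancellation, rather than any substantial computation, to be the only delicate step.
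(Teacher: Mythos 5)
Your proof is correct and is essentially the paper's own (implicit) argument: the Corollary follows from Theorem 1 by the specialization $p_1=1$, $p_k=0$ for $k\ge 2$, which kills every summand except the identity in the Frobenius characteristic and is exactly the substitution the paper itself invokes later to pass from equivariant to ordinary Euler characteristics, with the $g=e$ term handled by the same cancellation you describe (indeed the paper's proof of Theorem 1 computes $\xi(e)=(1+p_1t)^{\chi(X)}$ directly). One minor slip in your parenthetical aside: setting all $p_k=1$ extracts the $S_n$-invariants and so would compute $\chi(B(X,n)/G)$ rather than $\chi(B(X,n))$, but this does not affect the argument.
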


\begin{remark}
It is useful to remark that the power of the binomial
 $(1+p_{v(g)}t^{v(g)})$ is integer. Indeed,
it equals ${1\over v(g)}(\chi(X)-\sum l\cdot O_l(g))$,
that is the Euler characteristic of the quotient of the space of points with  the length of orbit  $v(g)$ by the (free) action of the cyclic subgroup generated by $g$. Moreover, if we denote by
$X_k(g)$ the set of points of $X$ with $g$-orbit of length $k$,
one can rewrite Theorem 1 in the form
$$\sum_{n=0}^{\infty}t^{n}\chi^{S_n}(F(X,n)/G)={1\over |G|}\sum_{g\in G}\prod_{k|v(g)}(1+p_kt^{k})^{\chi(X_k(g)/<g>)}.$$

This answer is true without any finiteness of the fixed point set, but the initial one is simpler in use.
\end{remark}

\begin{lemma}
The Euler characteristic of a quotient of $B(\mathbb{C}^{*}, n)$
by the natural action of $\mathbb{C}^{*}$ is equal to 0 for even $n$ and  is equal to 1 for odd $n$.
\end{lemma}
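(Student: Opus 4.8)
The plan is to trade the torus quotient for a finite-group quotient, where the action becomes transparent. First I would pass to ordered configurations. The scaling action of $\mathbb{C}^{*}$ on $F(\mathbb{C}^{*},n)$ is \emph{free}, since $\lambda z_i=z_i$ with $z_i\neq 0$ forces $\lambda=1$; normalizing the first point to $1$ (acting by $\lambda=z_1^{-1}$) then gives a homeomorphism $F(\mathbb{C}^{*},n)/\mathbb{C}^{*}\cong F(\mathbb{C}\setminus\{0,1\},n-1)$. Because the permutation action of $S_n$ commutes with scaling, I obtain $B(\mathbb{C}^{*},n)/\mathbb{C}^{*}=Y/S_n$, where $Y:=F(\mathbb{C}\setminus\{0,1\},n-1)$ carries the induced $S_n$-action.

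The key point, and the reason the naive relation $\chi=\chi(\mathbb{C}^{*})\cdot\chi(\text{quotient})=0$ is vacuous, is that this induced action is \emph{not} free: a class $[z]$ is fixed by $\sigma$ exactly when $\sigma(z)=\lambda z$ for some $\lambda\in\mathbb{C}^{*}$, i.e. for the rotationally symmetric configurations. I would therefore compute through the fixed loci using the averaging formula $\chi(Y/S_n)=\frac{1}{n!}\sum_{\sigma\in S_n}\chi(Y^{\sigma})$, which holds because $H^{*}(Y/S_n;\mathbb{Q})=H^{*}(Y;\mathbb{Q})^{S_n}$ and $\chi(Y^{\sigma})=L(\sigma)$ by the Lefschetz fixed point theorem.

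Analyzing $\sigma(z)=\lambda z$ cycle by cycle shows that $Y^{\sigma}$ is empty unless all cycles of $\sigma$ have a common length $\ell$ (so $n=r\ell$ with $r$ cycles), in which case $\lambda$ must be a primitive $\ell$-th root of unity and a fixed configuration is given by one base point per cycle whose $\ell$-element ``necklaces'' are pairwise disjoint. Disjointness amounts to the $\ell$-th powers of the base points being distinct, so, for each of the $\varphi(\ell)$ admissible roots $\lambda$, raising base points to the $\ell$-th power and normalizing the scaling exhibits this locus as an $\ell^{r-1}$-fold cover of $F(\mathbb{C}\setminus\{0,1\},r-1)$. Using the falling-factorial formula $\chi(F(\mathbb{C}\setminus\{0,1\},m))=(-1)^m m!$ (from $\chi(\mathbb{C}\setminus\{0,1\})=-1$), this gives $\chi(Y^{\sigma})=\varphi(\ell)\,\ell^{r-1}(-1)^{r-1}(r-1)!$.

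Finally I would assemble the sum. There are $n!/(\ell^{r}r!)$ permutations of cycle type $\ell^{r}$, so multiplying by $\chi(Y^{\sigma})$ and summing over the divisors $\ell\mid n$ (with $r=n/\ell$) collapses the powers of $\ell$ and the factorials to give
\[
\chi\bigl(B(\mathbb{C}^{*},n)/\mathbb{C}^{*}\bigr)=\frac{1}{n}\sum_{\ell\mid n}\varphi(\ell)\,(-1)^{n/\ell-1}.
\]
To evaluate this I would rewrite it as $\frac{1}{n}\bigl(\sum_{\ell\mid n}\varphi(\ell)-2\sum_{\ell\mid n,\;n/\ell\text{ even}}\varphi(\ell)\bigr)$ and use $\sum_{\ell\mid n}\varphi(\ell)=n$. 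For odd $n$ the second sum is empty and the value is $1$; for even $n$ a short count of the even-quotient divisors (splitting off the $2$-part of $n$) shows the second sum equals $n/2$, so the value is $0$. The main obstacle is the middle step, namely identifying the symmetric-configuration strata $Y^{\sigma}$ and computing their Euler characteristics; granting these, the falling-factorial formula and this elementary identity complete the proof.
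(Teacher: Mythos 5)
Your argument is correct, but it takes a genuinely different route from the paper's. You convert the torus quotient into a finite-group computation: identify $F(\mathbb{C}^{*},n)/\mathbb{C}^{*}\cong F(\mathbb{C}\setminus\{0,1\},n-1)=:Y$, apply the averaging formula $\chi(Y/S_n)=\frac{1}{n!}\sum_{\sigma}\chi(Y^{\sigma})$, classify the fixed loci (nonempty only for $\sigma$ of cycle type $\ell^{r}$ with $r\ell=n$, in which case they are $\ell^{r-1}$-fold covers of $F(\mathbb{C}\setminus\{0,1\},r-1)$, one for each of the $\varphi(\ell)$ primitive $\ell$-th roots of unity), and finish with a totient identity, arriving at the closed form $\frac{1}{n}\sum_{\ell\mid n}\varphi(\ell)(-1)^{n/\ell-1}$; all of these steps check out. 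The paper never leaves unordered configurations and never invokes equivariant averaging: it stratifies $B(\mathbb{C}^{*},n)/\mathbb{C}^{*}$ into pieces $U_d$ according to the exact order $d$ of the rotational symmetry of a configuration, notes that normalizing one point relates the locus of configurations with symmetry order divisible by $d$ to $B(\mathbb{C}\setminus\{0,1\},n/d-1)$, counted with multiplicity $n/d_1$ over $U_{d_1}$, and thereby gets the triangular linear system $\sum_{k\mid m}k\,\chi(U_{n/k})=(-1)^{m-1}$ (where $m=n/d$), whose unique solution is $\chi(U_n)=1$, $\chi(U_{n/2})=-1$ for even $n$, and $\chi(U_d)=0$ otherwise; summing the strata gives the lemma. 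Both proofs hinge on the same geometry---configurations invariant under multiplication by roots of unity, and the Euler characteristic of configuration spaces of $\mathbb{C}\setminus\{0,1\}$ from Getzler's corollary---but yours buys a closed formula and avoids solving a linear system, at the price of the Burnside--Lefschetz averaging formula (the same mechanism as the paper's Theorem 1, here applied with positive-dimensional fixed loci), while the paper's inversion argument is more elementary and, as a by-product, computes the Euler characteristic of each symmetry stratum $U_d$ separately, in the same spirit as its subsequent stratification of $\mathcal{M}_2$.
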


\begin{proof}

We can divide all points by the first one, so one of the points becomes equal to 1. Other points belong to the set $B(\mathbb{C}\setminus\{0,1\},n-1)$ with Euler characteristic $(-1)^{n-1}$ due to the corollary from the Getzler's formula. We do not distinguish points,
so can choose first point arbitrarily from $n$ variants.

Let  $U_d$ be  a stratum in
$B(\mathbb{C}^{*},n)/\mathbb{C}^{*}$ where $d$ is a maximal symmetry order of a configuration. It is clear that

\begin{equation}
\label{eq1}
\chi(B(\mathbb{C}^{*},n)/\mathbb{C}^{*})=\sum_{d|n}\chi(U_{d}),
\end{equation}

and for all $d$

\begin{equation}
\label{eqat2}
\chi(B(\mathbb{C}\setminus\{0,1\},{n\over d}-1))=(-1)^{n/d-1}=\sum_{d_1\vdots d,\,\, d_1|n}{n\over d_1}\chi(U_{d_1}) .
\end{equation}

If we set $n/d=m$, $n/d_1=k$, we  can rewrite (\ref{eqat2}) as
$$\sum_{k|m}k\chi(U_{n/k})=(-1)^{m-1}.$$
We obtain a system of linear equations on $\chi(U_d)$ enumerated by divisors of $n$, which has a unique solution.

If $n$ is odd, then each its divisor $m$ is odd, so it is easy to see, that $U_n=1$ and $U_k=0$ for $k<n$.

If $n$ is even, it is easy to see, that $U_n=1$, $U_{n\over 2}=-1$,
and $U_k=0$ for $k<{n\over 2}$.

Now (\ref{eq1}) implies the lemma.
\end{proof}

Let us return to the calculation of the equivariant Euler characteristic of $\mathcal{M}_{2,n}$. Since every curve of genus 2 is a hyperelliptic covering over the Riemann sphere ramified in 6 points, we will describe all tuples of 6 points on the $\mathbb{CP}^1$
with nontrivial symmetry groups and compute the Euler characteristics of the moduli spaces of corresponding curves.

1. Regular pentagon and a point in its center. Each 6-tuple with a symmetry of fifth order is equivalent to this configuration by the
projective transformation. All these configurations are projectively equivalent, the moduli space consists of one point.
To compute the generating function for the  $S_n$-equivariant
Euler characteristic of fibers we will use Theorem 1.
Let us compute for every  $g$ the corresponding summand.

a) Identical map. $(1+p_1t)^{\chi(X)}=(1+p_1t)^{-2}.$

b) Hyperelliptic involution. $v(g)=2, O_1(g)=6$.
Therefore the corresponding summand is
$${(1+p_1t)^6(1+p_2t^2)^{-2/2}\over (1+p_2t^2)^{6/2}}=(1+p_1t)^6(1+p_2t^2)^{-4}.$$

Above two summands present in each stratum.

c) Rotation  by $2\pi/5$. $v(g)=5, O_1(g)=3$. Summand equals to
$${(1+p_1t)^3(1+p_5t^5)^{-2/5}\over (1+p_5t^5)^{3/5}}=(1+p_1t)^3(1+p_5t^5)^{-1}.$$

d) Rotation composed with involution. $v(g)=10, O_1(g)=1, O_2(g)=1, O_5(g)=1.$ Summand equals to
$${(1+p_1t)(1+p_2t^2)(1+p_5t^5)(1+p_{10}t^{10})^{-{2\over 10}}\over (1+p_{10}t^{10})^{{1\over 10}(1+2+5)}}=
(1+p_1t)(1+p_2t^2)(1+p_5t^5)(1+p_{10}t^{10})^{-1}.$$

Thus,
$$\sum_{n=0}^{\infty}t^n\chi^{S_n}(F(C,n)/G))={1\over 10}((1+p_1t)^{-2}+(1+p_1t)^6(1+p_2t^2)^{-4}+$$ $$+4(1+p_1t)^3(1+p_5t^5)^{-1}+4(1+p_1t)(1+p_2t^2)(1+p_5t^5)(1+p_{10}t^{10})^{-1}).$$

The Euler characteristic of this stratum is equal to 1.

2. Regular hexagon. Analogously we obtain an answer:
$$\sum_{n=0}^{\infty}t^n\chi^{S_n}(F(C,n)/G))={1\over 24}((1+p_1t)^{-2}+(1+p_1t)^6(1+p_2t^2)^{-4}+$$ $$+4(1+p_1t)^2(1+p_2t^2)(1+p_6t^6)^{-1}+2(1+p_1t)^4(1+p_3t^3)^{-2}+$$
$$+2(1+p_2t^2)^2(1+p_3t^3)^2(1+p_6t^6)^{-2}
+8(1+p_1t)^2(1+p_2t^2)^{-2}+6(1+p_1t)^2)(1+p_2t^2)^2(1+p_4t^4)^{-2}.$$

This stratum is also a one point, and its Euler characteristic equals to 1.

3. Octahedron. Every 6-tuple of points with a symmetry of 4th order is projectively equivalent to the octahedron, therefore the moduli space is one point. The symmetry group of an octahedron contains the identical map, 6 rotations of order 4 with an axis connecting two vertices, 3 rotations on  $180^{\circ}$ with the same axis,
8 rotations by $120^{\circ}$ with axis,
passing through the centers of opposite faces, and
 6 rotations by $180^{\circ}$ with axis connecting the mediums of opposite edges. Moreover, every symmetry has a pair -- its composition with the hyperelliptic involution.
Combining all summands,  one analogously gets the following expression:
$${1\over 48}((1+p_1t)^{-2}+(1+p_1t)^6(1+p_2t^2)^{-4}+12(1+p_1t)^2(1+p_4t^4)(1+p_8t^8)^{-1}+$$
$$+6(1+p_1t)^2(1+p_2t^2)^2(1+p_4t^4)^{-2}+8(1+p_1t)^4(1+p_3t^3)^{-2}+8(1+p_2t^2)^2(1+p_3t^3)^2(1+p_6t^6)^{-2}+$$
$$+12(1+p_1t)^2(1+p_2t^2)^{-2}).$$

The Euler characteristic of this stratum is equal to 1.

4. A pair of regular triangles with a common center.
The symmetry group of the corresponding curve consists of
rotations, transformations of second order exchanging triangles,
and their compositions with the hyperelliptic involution.
Taking into account all these transformations, we obtain the answer:
$${1\over 12}((1+p_1t)^{-2}+(1+p_1t)^6(1+p_2t^2)^{-4}+2(1+p_1t)^4(1+p_3t^3)^{-2}+$$
$$+2(1+p_2t^2)^2(1+p_3t^3)^2(1+p_6t^6)^{-2}+6(1+p_1t)^2(1+p_2t^2)^{-2}).$$

The map $z\mapsto z^3+{1\over z^3}$ identifies the moduli space of curves of such type with the space $B(\mathbb{C}^{*},2)/\mathbb{C}^{*}$ of Euler characteristic 0. We should exclude  a regular hexagon and an octahedron from it, so the Euler characteristic of this stratum equals to -2.

5. General central-symmetric configuration of points.  The symmetry group contains a central symmetry, hyperelliptic involution and their composition. The answer for the equivariant Euler characteristic of fiber
equals to
$${1\over 4}((1+p_1t)^{-2}+(1+p_1t)^6(1+p_2t^2)^{-4}+2(1+p_1t)^2(1+p_2t^2)^{-2}).$$

The map $z\mapsto z^2+{1\over z^2}$ identifies the moduli space of curves of such type with the space $B(\mathbb{C}^{*},3)/\mathbb{C}^{*}$ of Euler characteristic 1. We should exclude strata of configurations of types 4 and 6 and their intersection (octahedron and regular hexagon) from it, so the Euler characteristic of this stratum equals to $1-(-2)-(-2)-2=3$.

6. The central-symmetric configuration of 4 points, 0 and the infinity. The symmetry group contains  transformations of previous
point and their compositions with the transformation exchanging 0 and $\infty$, and pairs of central-symmetric points. The answer in this case equals to
 $${1\over 8}((1+p_1t)^{-2}+(1+p_1t)^6(1+p_2t^2)^{-4}+2(1+p_1t)^2(1+p_2t^2)^{2}(1+p_4t^4)^{-2}+4(1+p_1t)^2(1+p_2t^2)^{-2}).$$

The map $z\mapsto z^2+{1\over z^2}$ identifies the moduli space of curves of such type with the space $B(\mathbb{C}^{*},2)/\mathbb{C}^{*}$ of Euler characteristic 0.  We should exclude from it a regular hexagon and an octahedron, so the Euler characteristic of this stratum equals -2.

7. Asymmetric configuration. Unique nontrivial symmetry is a hyperelliptic involution.
The answer reads as
$${1\over 2}((1+p_1t)^{-2}+(1+p_1t)^6(1+p_2t^2)^{-4}).$$

The Euler characteristic of this stratum equals to -1. It can be obtained by subtraction from $\chi(\mathcal{M}_{2})=1$ (by Tommasi's theorem) of Euler characteristics of all previous strata.

\section{Results}

Let us check that the weighted sum of Euler characteristics of strata is equal to the orbifold Euler characteristic of the coarse moduli space $\mathcal{M}_2$.

%The sum of Euler characteristics is equal to
%$$1+1+1-2+3-2-1=1,$$
%what agrees with the Tommasi's result.
%In fact, we should use it to calculate the Euler characteristic
%of asymmetric stratum from known Euler characteristics of other strata.

The sum of Euler characteristics of strata, divided by orders of corresponding symmetry group, is equal
$${1\over 10}+{1\over 24}+{1\over 48}-{2\over 12}+{3\over 4}-{2\over 8}-{1\over 2}={24+10+5-40+180-60-120\over 240}={-1\over 240},$$
in agreement with the formula of Harer and Zagier
(\cite{harzag}):
$$\chi_{orb}(\mathcal{M}_{g,n})=(-1)^{n}{(2g-3+n)!(2g-1)\over (2g)!}B_{2g},$$
where $B_{2g}$ are Bernoulli numbers. For $g=2$ $B_{2g}={-1\over 30},$
so  $$\chi_{orb}(\mathcal{M}_{2,0})={1\cdot 3\over 4!}\cdot {-1\over 30}={-1\over 240}.$$

Since the agreement is achieved, let us collect all answers from the different strata. We get

\begin{theorem} The generating function for the $S_n$-equivariant Euler characteristics of $\mathcal{M}_{2,n}$ is equal to
$$\sum_{n=0}^{\infty}t^n\chi^{S_n}(\mathcal{M}_{2,n}))={-1\over 240}(1+p_1t)^{-2}-{1\over 240}(1+p_1t)^6(1+p_2t^2)^{-4}+$$
$$+{2\over 5}(1+p_1t)^3(1+p_5t^5)^{-1}+{2\over 5}(1+p_1t)(1+p_2t^2)(1+p_5t^5)(1+p_{10}t^{10})^{-1}+$$
$$+{1\over 6}(1+p_1t)^2(1+p_2t^2)(1+p_6t^6)^{-1}-{1\over 12}(1+p_1t)^4(1+p_3t^3)^{-2}-$$
$$-{1\over 12}(1+p_2t^2)^2(1+p_3t^3)^2(1+p_6t^6)^{-2}+{1\over 12}(1+p_1t)^2(1+p_2t^2)^{-2}+$$
$$+{1\over 4}(1+p_1t)^2(1+p_4t^4)(1+p_8t^8)^{-1}-{1\over 8}(1+p_1t)^2(1+p_2t^2)^2(1+p_4t^4)^{-2}.$$
\end{theorem}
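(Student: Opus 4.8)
The plan is to exploit the forgetful map $\pi\colon\mathcal{M}_{2,n}\to\mathcal{M}_2$ together with the (orbifold) description of its fiber over a point $[C]$ as $F(C,n)/\mathrm{Aut}(C)$. First I would stratify $\mathcal{M}_2$ according to the isomorphism type of $\mathrm{Aut}(C)$ and its action on the six Weierstrass points of the hyperelliptic curve $C$; over each locally closed stratum $S$ the restriction of $\pi$ is a fibration whose fiber is, up to isomorphism, a fixed quotient $F(C,n)/\mathrm{Aut}(C)$. Since the $S_n$-equivariant Euler characteristic is additive over a stratification and multiplicative in fibrations with constant fiber (the base carrying the trivial $S_n$-action), this gives the master identity
$$\sum_{n=0}^{\infty}t^n\chi^{S_n}(\mathcal{M}_{2,n})=\sum_{S}\chi(S)\cdot\Phi_S(t),\qquad \Phi_S(t):=\sum_{n=0}^{\infty}t^n\chi^{S_n}\big(F(C,n)/\mathrm{Aut}(C)\big),$$
where $S$ runs over the seven symmetry strata and $[C]$ is any curve in $S$.

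The second step is to compute each generating function $\Phi_S$ by applying Theorem 1 with $X=C$ and $G=\mathrm{Aut}(C)$. Here I would use that every genus-$2$ curve is a double cover of $\mathbb{CP}^1$ branched at six points, so that $\mathrm{Aut}(C)$ consists of the hyperelliptic involution together with the lifts of the projective symmetries of the branch configuration; every nonidentity element has only finitely many fixed points, so Theorem 1 is applicable. For each of the seven symmetry types (regular pentagon with its center, regular hexagon, octahedron, a pair of triangles, a general central-symmetric configuration, the central-symmetric configuration together with $0,\infty$, and the asymmetric configuration) I would list the elements $g$, their orders $v(g)$, and the orbit data $O_k(g)$, then read off $\Phi_S$ directly from the formula of Theorem 1; these are exactly the expressions displayed in cases 1--7. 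In particular the identity contributes $(1+p_1t)^{\chi(C)}=(1+p_1t)^{-2}$ and the hyperelliptic involution contributes $(1+p_1t)^6(1+p_2t^2)^{-4}$ in every stratum.

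The third step is to determine the ordinary Euler characteristics $\chi(S)$. Strata $1$, $2$, $3$ are single points, so $\chi=1$; strata $4$, $5$, $6$ are identified via $z\mapsto z^3+z^{-3}$ and $z\mapsto z^2+z^{-2}$ with spaces $B(\mathbb{C}^*,k)/\mathbb{C}^*$ whose Euler characteristics are provided by the Lemma, and the values $\chi(S)=-2,\,3,\,-2$ then follow by inclusion--exclusion after removing the more symmetric sub-loci, while $\chi(S_7)=-1$ is obtained by subtracting all the others from $\chi(\mathcal{M}_2)=1$ (Tommasi's theorem). The main obstacle is precisely this bookkeeping: one must check that the seven strata exhaust $\mathcal{M}_2$, identify their closures so the inclusion--exclusion is correct, and enumerate every automorphism group together with its orbit structure on the branch points without error. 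As an independent verification I would confirm $\sum_S\chi(S)/|\mathrm{Aut}(C_S)|=-1/240$, matching the Harer--Zagier orbifold Euler characteristic of $\mathcal{M}_2$; this also forces the coefficient of both ubiquitous terms $(1+p_1t)^{-2}$ and $(1+p_1t)^6(1+p_2t^2)^{-4}$ to equal $-1/240$. Substituting the values $\chi(S)$ into the master identity and collecting like terms then yields the stated generating function.
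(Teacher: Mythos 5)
Your proposal is correct and follows essentially the same route as the paper: stratify $\mathcal{M}_2$ by the symmetry type of the six branch points, compute each fiber generating function $\Phi_S$ via Theorem 1, determine the stratum Euler characteristics using Lemma 1, inclusion--exclusion, and Tommasi's theorem, and sum $\chi(S)\Phi_S(t)$, with the Harer--Zagier value $-1/240$ serving as the consistency check. The paper's proof is exactly this computation carried out in cases 1--7 of Section 3 and collected in Section 4.
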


%$\begin{proof}
%$$\sum_{n=0}^{\infty}t^n\chi^{S_n}(\mathcal{M}_{2,n}))={1\over 10}((1+p_1t)^{-2}+(1+p_1t)^6(1+p_2t^2)^{-4}+$$ $$+4(1+p_1t)^3(1+p_5t^5)^{-1}+4(1+p_1t)(1+p_2t^2)(1+p_5t^5)(1+p_{10}t^{10})^{-1})+$$
%$${1\over 24}((1+p_1t)^{-2}+(1+p_1t)^6(1+p_2t^2)^{-4}+$$ $$+4(1+p_1t)^2(1+p_2t^2)(1+p_6t^6)^{-1}+2(1+p_1t)^4(1+p_3t^3)^{-2}+$$
%$$+2(1+p_2t^2)^2(1+p_3t^3)^2(1+p_6t^6)^{-2}
%+14(1+p_1t)^2(1+p_2t^2)^{-2})+$$
%$${1\over 48}((1+p_1t)^{-2}+(1+p_1t)^6(1+p_2t^2)^{-4}+12(1+p_1t)^2(1+p_2t^2)^{-1}+$$
%$$+8(1+p_1t)^4(1+p_3t^3)^{-2}+8(1+p_2t^2)^2(1+p_3t^3)^2(1+p_6t^6)^{-2}+$$
%$$+12(1+p_1t)^2(1+p_2t^2)^{-2}+6(1+p_1t)^2(1+p_4t^4)^{-1})-$$
%$$-{2\over 12}((1+p_1t)^{-2}+(1+p_1t)^6(1+p_2t^2)^{-4}+2(1+p_1t)^4(1+p_3t^3)^{-2}+$$
%$$+2(1+p_2t^2)^2(1+p_3t^3)^2(1+p_6t^6)^{-2}+6(1+p_1t)^2(1+p_2t^2)^{-2})+$$
%$${3\over 4}((1+p_1t)^{-2}+(1+p_1t)^6(1+p_2t^2)^{-4}+2(1+p_1t)^2(1+p_2t^2)^{-2})-$$
%$$-{2\over 8}((1+p_1t)^{-2}+(1+p_1t)^6(1+p_2t^2)^{-4}+4(1+p_1t)^2(1+p_2t^2)^{-2}+2(1+p_1t)^2(1+p_2t^2)(1+p_4t^4)^{-1})-$$
%$$-{1\over 2}((1+p_1t)^{-2}+(1+p_1t)^6(1+p_2t^2)^{-4}).$$
%\end{proof}

This answer should be compared to the known results for small number of points, that is, for the coefficients at small powers of $t$.

Using the Maple program, one obtains

$$\sum_{n=0}^{\infty}t^n\chi^{S_n}(\mathcal{M}_{2,n}))=
1+2p_1\cdot t+p_1^2\cdot t^2+({1\over 2}p_4+{2\over 3}p_1p_3-{1\over 6}p_1^4)\cdot t^4+\ldots=$$
$$1+2s_1\cdot t^4+(s_1+s_2)\cdot t^2+(s_4-s_{3,1}-s_{2,2})\cdot t^4+\ldots,$$
which agrees with the  answers obtained by Getzler (\cite{TRR}) and Tommasi and Bergstr$\ddot{o}$m(\cite{bertom}).

It can be useful, also, to consider the usual (non-equivariant) Euler characteristic. To obtain it, one should put $p_1=1, p_i=0$ for $i>1$, and Theorem 2 implies that

\begin{equation}
\label{euler}
\sum_{n=0}^{\infty}{t^n\over n!}\chi(\mathcal{M}_{2,n})=-{1\over 240}(1+t)^{-2}-{1\over 12}+{2\over 5}(1+t)+{3\over 8}(1+t)^2+{2\over 5}(1+t)^3-{1\over 12}(1+t)^4-{1\over 240}(1+t)^{6}.
\end{equation}

This  gives us a list of Euler characteristics,  which can be also compared with the list of orbifold Euler characteristics known from the Harer-Zagier formula:
\vskip 1 cm
		\begin{tabular}{|l||l|l|l|l|l|l|l|l|}
			\hline
$n$ & $0$ & $1$ & $2$ & $3$ & $4$ & $5$ & $6$ & $n\ge 7$\\ \hline
$\chi(\mathcal{M}_{2,n})$ & $1$ & $2$ & $2$ & $0$ & $-4$ & $0$ & $-24$ & ${(-1)^{n+1}\cdot (n+1)!\over 240}$ \\
\hline
$\chi_{orb}(\mathcal{M}_{2,n})$ & $-{1\over 240}$ & ${1\over 120}$ & $-{1\over 40}$ & ${1\over 10}$ & $-{1\over 2}$ & $3$ & $-21$ & ${(-1)^{n+1}\cdot (n+1)!\over 240}$

\\ \hline
		\end{tabular}

\vskip 1 cm

It is easy to explain the remarkable symmetry of the equation (\ref{euler}) (the coefficient at $(1+t)^L$ is the same as the one at $(1+t)^{4-L}$): if an automorphism of a curve has $L$ fixed points,
then its composition with the hyperelliptic involution has $4-L$ fixed points. This fact is also true for a hyperelliptic curve of any genus. For the identical map Lefshetz number equals to $2-2g$, and the hyperelliptic involution has $2+2g$ fixed ramification points. For any other automorphism its projection onto $\mathbb{CP}^1$ has two fixed points, and fixed points of the initial map should belong to the preimage of these points. Given map interchanges the preimages of one of these points, if and only if its composition with the involution does not interchange them. Therefore the sum of fixed points of two maps is equal to 4.

\begin{lemma}
The generating sum for the Euler characteristics of moduli spaces of hyperelliptic curves of the given genus $g$ with marked points
has a form

$$\sum_{n=0}^{\infty}{t^n\over n!}\chi(\mathcal{H}_{g,n})=-{1\over 4g(2g+1)(2g+2)}(1+t)^{2-2g}+a+b(1+t)+c(1+t)^2+$$
\begin{equation}
\label{euler2}
b(1+t)^3+a(1+t)^4-{1\over 4g(2g+1)(2g+2)}(1+t)^{2+2g},
\end{equation}
where $a, b$ and $c$ are some unknown rational numbers (depending of $g$) satisfying the equation
\begin{equation}
\label{euler3}
2a+2b+c=1+{1\over 2g(2g+1)(2g+2)}.
\end{equation}
These Euler characteristics can be found from the following table:

\vskip 1 cm

\begin{tabular}{|l|l|}
\hline
$n$ & $\chi(\mathcal{H}_{g,n})$\\
\hline \hline
$0$ & $1$\\
\hline
$1$ & $2$\\
\hline
$2$ & $x$\\
\hline
$3$ & $3x-6$\\
\hline
$4$ & $y$\\
\hline
$5$ & $0$\\
\hline
$4<n\le 2+2g$ & $-{1\over 4g(2g+1)(2g+2)}\cdot[{(2g+2)!\over (2g+2-n)!}+(-1)^{n}\cdot{(2g-3+n)!\over (2g-3)!}]$ \\
\hline
$n>2+2g$ & ${(-1)^{n+1}\cdot (2g-3+n)!\over (2g-3)!\cdot 4g(2g+1)(2g+2)}$ \\
\hline
\end{tabular}

\vskip 1 cm

where $x$ and $y$ are some unknown integers (depending of $g$).
\end{lemma}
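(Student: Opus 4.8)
The plan is to integrate the fibrewise generating function of the Corollary over a stratification of $\mathcal{H}_g$ by automorphism type, exactly as was done for $\mathcal{M}_2$ in the seven cases above, but extracting only the features that persist in arbitrary genus. Write $\mathcal{H}_g=\bigsqcup_S S$ as a finite union of locally closed strata on which the pair $(C,\mathrm{Aut}(C))$ has constant isomorphism type (and hence constant Lefschetz data); such a stratification exists and is finite for each $g$. The fibre of the forgetful map $\mathcal{H}_{g,n}\to\mathcal{H}_g$ over $[C]\in S$ is $F(C,n)/\mathrm{Aut}(C)$, whose exponential Euler-characteristic generating function is constant on $S$ and equal, by the Corollary, to $\tfrac{1}{|G_S|}\sum_{\sigma\in G_S}(1+t)^{L(\sigma)}$ with $G_S=\mathrm{Aut}(C)$. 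Invoking the additivity of $\chi$ over the strata and its multiplicativity along the fibration over each $S$ (the same motivic property used implicitly in the strata computations above), I would obtain
$$\sum_{n=0}^{\infty}\frac{t^n}{n!}\chi(\mathcal{H}_{g,n})=\sum_S\frac{\chi(S)}{|G_S|}\sum_{\sigma\in G_S}(1+t)^{L(\sigma)}.$$

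Next I would organise this double sum by Lefschetz number. The hyperelliptic involution $\iota$ is central in every $\mathrm{Aut}(C)$, so multiplication by $\iota$ partitions $G_S\setminus\{e,\iota\}$ into pairs $\{\sigma,\sigma\iota\}$; by the fixed-point count established just before the statement, $L(\sigma)+L(\sigma\iota)=4$ for each such pair, while $L(e)=2-2g$ and $L(\iota)=2+2g$. Hence the right-hand side is a combination of the seven powers $(1+t)^{2-2g},(1+t)^{0},\dots,(1+t)^{4},(1+t)^{2+2g}$ whose coefficients are symmetric under $L\mapsto 4-L$, which is precisely the shape (\ref{euler2}) with unknowns $a,b,c$. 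For $g\ge 2$ the value $L=2-2g$ is attained only by $e$ and $L=2+2g$ only by $\iota$, so the coefficient of each extreme power equals $\sum_S\chi(S)/|G_S|=\chi_{orb}(\mathcal{H}_g)$.

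It then remains to evaluate this orbifold Euler characteristic. Identifying $\mathcal{H}_g$ with the stack of $2g+2$ unordered points on $\mathbb{P}^1$ modulo $PGL_2$, carrying the generic $\mathbb{Z}/2$ coming from $\iota$, I would write the reduced (branch-point) moduli as $M_{0,2g+2}/S_{2g+2}$ and get
$$\chi_{orb}(\mathcal{H}_g)=\frac{1}{2}\cdot\frac{\chi(M_{0,2g+2})}{(2g+2)!}=\frac{-(2g-1)!}{2\,(2g+2)!}=-\frac{1}{4g(2g+1)(2g+2)},$$
using $\chi(M_{0,2g+2})=(-1)^{2g-1}(2g-1)!=-(2g-1)!$, exactly the asserted coefficient. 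The constraint (\ref{euler3}) I would read off by setting $t=0$: the left side becomes $\chi(\mathcal{H}_{g,0})=\chi(\mathcal{H}_g)=1$ by Tommasi's theorem, and the right side becomes $2a+2b+c-\tfrac{1}{2g(2g+1)(2g+2)}$.

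Finally the table follows by differentiation at $t=0$. Since the $a,b,c$-part has degree $4$ in $1+t$, it contributes nothing to $\chi(\mathcal{H}_{g,n})$ for $n\ge 5$, leaving
$$\chi(\mathcal{H}_{g,n})=-\frac{1}{4g(2g+1)(2g+2)}\Big[(2-2g)(1-2g)\cdots(3-2g-n)+(2g+2)(2g+1)\cdots(2g+3-n)\Big],$$
where the second (falling-factorial) product equals $\tfrac{(2g+2)!}{(2g+2-n)!}$ and vanishes for $n>2g+2$, while the first equals $(-1)^{n}\tfrac{(2g-3+n)!}{(2g-3)!}$; this reproduces both lower rows of the table. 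The entries $n=0,1,3,5$ come from the same expansion together with the constraint, the value $\chi(\mathcal{H}_{g,5})=0$ being forced by the antisymmetry of the degree-$5$ falling factorial under $L\mapsto 4-L$, whereas $n=2$ and $n=4$ record the two remaining free integers $x,y$ (equivalently, pin down $a,b,c$ via the three linear relations). The only genuinely delicate step is the first paragraph: justifying the stratified integration for general $g$ without enumerating strata, that is, the finiteness of automorphism types and the additivity/multiplicativity of $\chi$, together with the fact that the extreme Lefschetz values $2\mp2g$ are attained solely by $e$ and $\iota$; granting these structural inputs and the Lefschetz pairing, the remainder is the elementary binomial bookkeeping sketched here, and specialising to $g=2$ recovers equation (\ref{euler}) as a check.
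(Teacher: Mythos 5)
Your proposal is correct and follows essentially the same route as the paper's proof: the symmetric form (\ref{euler2}) obtained from Corollary 1 together with the pairing $L(\sigma)+L(\sigma\iota)=4$, identification of the two extreme coefficients with $\chi_{orb}(\mathcal{H}_{g,0})=-{1\over 4g(2g+1)(2g+2)}$ computed from configurations of the $2g+2$ branch points on $\mathbb{CP}^1$ modulo projective transformations with the factor ${1\over 2}$ for the involution (your $M_{0,2g+2}/S_{2g+2}$ computation is the paper's ``fix three points at $0,1,\infty$'' argument in different clothing), Tommasi's theorem giving the constraint (\ref{euler3}), and binomial expansion for the table, including the cancellation that forces $\chi(\mathcal{H}_{g,5})=0$ and the linear relations yielding $\chi(\mathcal{H}_{g,3})=3x-6$. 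The only difference is one of exposition: you make explicit the stratified-integration step and the observation that only $e$ and $\iota$ attain the Lefschetz numbers $2\mp 2g$, both of which the paper leaves implicit in the phrase ``the above discussion and Corollary 1 imply.''
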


\begin{remark}
Since a hyperelliptic curve with more than $2g+2$ marked points has no automorphisms, in the "stable limit" $n>2g+2$ the Euler charcteristic of $\mathcal{H}_{g,n}$ coincides with the orbifold one, and therefore the answer in the last line of the table is known. The Euler characteristics from $n=0$ and $n=1$ follows directly from Tommasi's theorem. Slightly more complicated answer for $4<n\le 2+2g$, for example, the vanishing of $\chi(\mathcal{H}_{g,5})$ for each genus, seems to be a new result.
Hence we do not know the Euler characteristics only for $n=2,3,$ and 4 for higher genus, though for $g=2$ they are presented  above. \end{remark}

\begin{proof}
The above discussion and Corollary 1 imply the equation (\ref{euler2}), but with unknown (equal) coefficients at
$(1+t)^{2-2g}$ and $(1+t)^{2+2g}$. These coefficients are equal to the orbifold   Euler characteristic of the space $\mathcal{H}_{g,0}$, which equals to $-{1\over 4g(2g+1)(2g+2)}$.

Indeed, one can set three of $(2g+2)$ ramification points to be equal $0, 1,\infty$ -- this can be done in $(2g+2)(2g+1)\cdot 2g$ ways. Other points belong to the space $B(\mathbb{CP}^1\setminus\{0,1,\infty\},2g-1)$ with Euler characteristic equal to $-1$ by Getzler's formula. We should also take into account the hyperelliptic involution, which gives the factor ${1\over 2}$.
Combining these factors, we get
$${1\over (2g+2)(2g+1)\cdot 2g}\cdot (-1)\cdot {1\over 2}=-{1\over 4g(2g+1)(2g+2)}.$$

Tommasi's theorem states that $\chi(\mathcal{H}_{g,0})=1$, what implies linear equation (\ref{euler3}) on $a$, $b$, $c$.
We get

$\begin{cases}
\chi(\mathcal{H}_{g,0})=2a+2b+c+2d=1,\\
{1\over 2!}\chi(\mathcal{H}_{g,2})=6a+3b+c+2d(1+2g^2)={x\over 2!},\\
{1\over 3!}\chi(\mathcal{H}_{g,3})=4a+b+4dg^2,\\
\end{cases}$
(here $d=-{1\over 4g(2g+1)(2g+2)}$), so $${1\over 3!}\chi(\mathcal{H}_{g,3})={x\over 2!}-1.$$

\end{proof}

\section{Remarks}

Approach of this paper is similar to the Getzler's one (\cite{getzler}, \cite{getzler2} and \cite{TRR}), but it is slightly extended to the case of nontrivial automorphism groups. Since the enumeration of the symmetric configurations of points on the projective line is clear (because the classification of discrete subgroups of $PSL(2,\mathbb{C})$ is well known), one can try to obtain
the $S_n$-equivariant Euler characteristics of the moduli spaces $\mathcal{H}_{g,n}$ of hyperelliptic curves for all $g$ and $n$.
The problem of the calculation of the equivariant Euler characteristics of fibers of the forgetful map is purely combinatorial, and the geometry is involved only in the calculation of the Euler characteristics of strata with given symmetry groups. We hope that it can be calculated using methods similar to Lemma 1.

Since the classification of discrete subgroups of $PSL(2,\mathbb{C})$ give us a very clear list of possible symmetry groups, one could expect a more understandable formula for the generating function for equivariant Euler characteristics of $\mathcal{H}_{g,n}$ for all $g$ and $n$. One could expect that this will be related to the recursion relations of Bergstr$\ddot{o}$m (\cite{bergstrom}).

It is important to note, that for given $g$ one should calculate the finite  data to get the answers for all natural $n$
-- the Euler characteristics of finite number of strata in the moduli space of curves with no marked points and lengths of orbits of nontrivial automorphisms. It is also true for $\mathcal{M}_{g,n}$, but for the higher genus the symmetry groups are much more complicated.

Moscow State University,\newline
Department of Mathematics and Mechanics.\newline
E.mail: gorsky@mccme.ru.

\end{document}